\newtheorem{thm}{Theorem}
\newtheorem{cor}[thm]{Corollary}
\theoremstyle{definition}
\newtheorem{exmp}{Example}
\title[length-minimizing level curves]{Length-minimizing level curves via calibrations}
\author{Kwok-Kun Kwong}
\address{School of Mathematics and Applied Statistics, University of Wollongong, NSW 2522, Australia}
\email{kwongk@uow.edu.au}
\author{Hojoo Lee}
\address{Department of Mathematics and Institute of Pure and Applied Mathematics, Jeonbuk National University,
Jeonju 54896, Korea}
\email{compactkoala@gmail.com, kiarostami@jbnu.ac.kr}
\begin{document}

 \maketitle

\begin{abstract}
We present an elementary criterion to show the length-minimizing property of geodesics for a large class of conformal metrics. In particular,  
we prove the length-minimizing property of level curves of harmonic functions and the length-minimizing property of a family of the conic sections with the eccentricity $\varepsilon$ in the upper half plane endowed with the conformal metric $ \left( {\varepsilon}^{2} + \frac{1}{\;{y^2} \;} \right) \left(dx^{2} + dy^{2} \right)$. 
\end{abstract}
 


 
 Inspired by two neat proofs due to G. Lawlor, briefly sketched in \cite[p. 247-248]{Lawlor1996} (see also his paper \cite{Lawlor1998}), of the length-minimizing property of the cycloid in the lower half plane endowed with the conformal metric $ \frac{1}{\;  -y \;} \left( dx^2 + dy^2 \right)$, we provide an elementary first order criterion to determine the length-minimizing property of geodesics for a large class of conformal metrics. The key idea is to construct two families of orthogonal level curves by integrating exact differential equations. 
  
 \begin{thm} \label{minimizers} Let $\Omega \subset {\mathbb{R}}^{2}$ denote an open domain. Assume that two ${\mathcal{C}}^{1}$ functions
 $f: \Omega \to \mathbb{R}$ and  $g: \Omega \to \mathbb{R}$ satisfy the following three conditions:
 \begin{equation*}
   \nabla f(x, y) \cdot \nabla g(x, y) =0, \quad
   \Vert \,  \nabla f(x, y) \, \Vert > 0, \quad
    \Vert \,  \nabla g(x, y) \, \Vert > 0. \\
\end{equation*}
The given ${\mathcal{C}}^{1}$ parameterized curve  ${\mathbf{X}}_{*}$  connecting from $( {x}_{1}, {y}_{1}) \in \Omega$ to $( {x}_{2}, {y}_{2})  \in \Omega$ lies in
the level curve  
 \begin{equation*}
 {C}_{*} = \left\{ \;  \left(x, y \right) \in \Omega \subset {\mathbb{R}}^{2}  \; \vert \;   g(x, y)=g( {x}_{1}, {y}_{1})
    =g( {x}_{2}, {y}_{2})  \; \right\}.
\end{equation*}
Then, it becomes the geodesic with respect to the conformal metric ${\Vert \, \nabla f(x, y) \, \Vert}^{2} \left( {dx}^{2} + {dy}^{2} \right)$. Moreover, for any ${\mathcal{C}}^{1}$ parameterized curve $\mathbf{X} \subset \Omega$ connecting from $( {x}_{1}, {y}_{1})$ to $( {x}_{2}, {y}_{2})$, we have the inequality
\begin{equation*}
   \int_{\mathbf{X}} \, \Vert \,  \nabla f(x, y) \, \Vert \; ds  \geq    \int_{{\mathbf{X}}_{*}} \, \Vert \, \nabla f(x, y) \, \Vert \; ds.
\end{equation*}
In other words, the curve ${\mathbf{X}}_{*}$ becomes a weighted length-minimizer with respect to the density $\Vert \,  \nabla f(x, y) \, \Vert$. 
 \end{thm}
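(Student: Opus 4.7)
The plan is to use the exact $1$-form $df$ as a \emph{calibration}: the hypothesis $\nabla f\cdot \nabla g=0$ together with $\|\nabla g\|>0$ forces every curve in the level set of $g$ to have velocity proportional to $\nabla f$, so $\mathbf{X}_*$ always moves along the direction of steepest ascent (or descent) of $f$. Since $f$ serves as a potential whose net change along any path depends only on the endpoints, this will produce a sharp lower bound for the weighted arclength.

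For the universal lower bound, take any $\mathcal{C}^{1}$ curve $\mathbf{X}:[a,b]\to\Omega$ from $(x_1,y_1)$ to $(x_2,y_2)$. The chain rule and the fundamental theorem of calculus give
\[
f(x_2,y_2)-f(x_1,y_1)\;=\;\int_{a}^{b} \nabla f(\mathbf{X}(t))\cdot \mathbf{X}'(t)\, dt \;=\;\int_{\mathbf{X}} \nabla f\cdot \mathbf{T}\; ds,
\]
where $\mathbf{T}$ is the unit tangent wherever defined. The Cauchy--Schwarz inequality $|\nabla f\cdot\mathbf{T}|\leq\|\nabla f\|$ then yields
\[
\int_{\mathbf{X}}\|\nabla f\|\, ds\;\geq\;\bigl|\,f(x_2,y_2)-f(x_1,y_1)\,\bigr|.
\]
To see that $\mathbf{X}_*$ saturates this bound, I would use that $g$ is constant along $\mathbf{X}_*$, so the tangent $\mathbf{T}_*$ is perpendicular to $\nabla g$ at every point. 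In the plane the orthogonal complement of the nonzero vector $\nabla g$ is one-dimensional, and since $\nabla f$ is nonzero and orthogonal to $\nabla g$, it spans this complement; hence $\mathbf{T}_* = \pm \nabla f/\|\nabla f\|$ and $\nabla f\cdot\mathbf{T}_* = \pm\|\nabla f\|$ pointwise. Integrating therefore recovers $\int_{\mathbf{X}_*}\|\nabla f\|\, ds = |f(x_2,y_2)-f(x_1,y_1)|$, which, combined with the previous display, gives the comparison inequality. Because $\mathbf{X}_*$ globally minimizes weighted arclength in the conformal metric $\|\nabla f\|^{2}(dx^{2}+dy^{2})$, it is automatically a geodesic.

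The main delicate point is the sign consistency of $\nabla f\cdot\mathbf{T}_*$ along $\mathbf{X}_*$: if the sign were to flip, the signed integral would be strictly smaller than the unsigned one and equality would fail. This is handled by noting that $(f\circ\mathbf{X}_*)'(t)=\nabla f\cdot\mathbf{X}_*'(t)$ equals $\pm\|\nabla f\|\,\|\mathbf{X}_*'(t)\|$, whose sign is locally constant and, by connectedness of the parameter interval, globally constant; equivalently, $f$ is monotone along $\mathbf{X}_*$. Apart from this observation, the proof is a direct application of Cauchy--Schwarz and the fundamental theorem of calculus, with the orthogonality and non-vanishing hypotheses feeding straight into the calibration argument.
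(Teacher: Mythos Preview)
Your argument is correct and follows essentially the same calibration strategy as the paper: both use the exact $1$-form $df$, the path-independence $\int_{\mathbf{X}}df=\int_{\mathbf{X}_*}df$, and the pointwise bound $\nabla f\cdot\mathbf{T}\le\|\nabla f\|$ (the paper phrases this as $\cos(\Theta_*-\Theta)\le 1$ after extending the unit field $\mathbf{T}_*=\pm\nabla f/\|\nabla f\|$ to all of $\Omega$). Your explicit treatment of the sign consistency along $\mathbf{X}_*$ is a point the paper handles only implicitly when it ``chooses the sign'' of $\mathbf{T}_*$ to match the tangent of $\mathbf{X}_*$.
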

 
 \begin{proof}  On the domain $\Omega$, we introduce the unit vector field ${\mathbf{T}}_{*}(x, y)$ and angle ${\Theta}_{*}(x, y)$ by
   \begin{equation} \label{sign}
{\mathbf{T}}_{*}(x, y) =  \pm  \frac{1}{\; \Vert  \;  \nabla f(x,y)   \; \Vert \;} \nabla f(x,y) 
    = \begin{bmatrix}    \;  \cos {\Theta}_{*}(x, y) \; \\   \;  \sin {\Theta}_{*}(x, y) \; 
      \end{bmatrix},
   \end{equation}
   where the sign will be chosen later, and introduce the unit vector field
   \begin{equation*}
  {\mathbf{N}}_{*}(x, y) =  \frac{1}{\; \Vert  \;  \nabla g(x,y)   \; \Vert \;} \nabla g(x,y). 
  \end{equation*}   
 According to the assumption $\nabla f \cdot   \nabla g =0$ on the domain $\Omega$, we find that, on the level curve ${C}_{*}  \subset  \Omega$,   
  \begin{equation*}
   {\mathbf{T}}_{*}(x, y) \cdot {\mathbf{N}}_{*}(x, y)  =0.
  \end{equation*}   
We choose the sign in (\ref{sign}) so that ${\mathbf{T}}_{*}(x, y)$ is the unit tangent vector field on the oriented curve ${\mathbf{X}}_{*} \subset {C}_{*}$.
The vector field ${\mathbf{T}}_{*}(x, y)$ is a natural extension of the unit tangent vector field on the curve ${\mathbf{X}}_{*}$ to the whole domain $\Omega$. Along the oriented competing curve ${\mathbf{X}}(s)=(x(s), y(s))$, where $s$ is the arc-length parameter starting from the initial point $( {x}_{1}, {y}_{1})$, we define the unit tangent vector field ${\mathbf{T}}(x(s), y(s))$ and  the angle ${\Theta}(x(s), y(s))$ with
  \begin{equation*}
    \mathbf{T}(x(s), y(s))       =     \begin{bmatrix} 
       \;  \frac{\;d\;}{ds} x(s) \; \\  \; \frac{\;d\;}{ds} y(s) \; 
      \end{bmatrix} =  \begin{bmatrix}
       \;  \cos {\Theta}(x(s), y(s)) \; \\  \;  \sin {\Theta}(x(s), y(s)) \; 
      \end{bmatrix}.
  \end{equation*}
On the competitor ${\mathbf{X}}(s)=(x(s), y(s))$ lying in the domain $\Omega$, we consider the geometric quantity 
 \begin{equation*}
    {\mathbf{T}}_{*}(x(s), y(s)) \cdot      \mathbf{T}(x(s), y(s)) = \cos \left( \; {\Theta}_{*}(x(s), y(s)) -  {\Theta}(x(s), y(s))   \; \right).
\end{equation*}
The line integral of the quantity $\Vert \, \nabla f \, \Vert \, \cos \left(\; {\Theta}_{*} - {\Theta} \;\right)$ along the competitor ${\mathbf{X}}(s)$ is constant:
  \begin{equation*}
   \int_{\mathbf{X}}  \, \Vert \, \nabla f \, \Vert \, \cos \left( {\Theta}_{*} - {\Theta} \right)   \, ds  
  =    \int_{\mathbf{X}} \,  \nabla f \cdot  {\mathbf{T}} \, ds_{{}_{{\mathbf{X}}}}  
  =  \int_{\mathbf{X}} \, df =  \int_{{\mathbf{X}}_{*}} \, df
  =   \int_{{\mathbf{X}}_{*}} \,  \nabla f \cdot  {\mathbf{T}}_{*} \, ds_{{}_{{\mathbf{X}}_{*}}}  
   =   \int_{{\mathbf{X}}_{*}} \, \Vert \, \nabla f \, \Vert \, ds.
\end{equation*}   
It follows from this and the estimation $1 \geq \cos \left( {\Theta}_{*} - {\Theta} \right)$ that 
\begin{equation*}
   \int_{\mathbf{X}} \, \Vert \,  \nabla f(x, y) \, \Vert \; ds  \geq    \int_{\mathbf{X}}  \, \Vert \, \nabla f \, \Vert \, \cos \left( {\Theta}_{*} - {\Theta} \right)   \, ds   =    \int_{{\mathbf{X}}_{*}} \, \Vert \, \nabla f(x, y) \, \Vert \; ds.
\end{equation*}
  \end{proof}
  
  The key idea in the above proof of Theorem \ref{minimizers} is to employ the method of calibrations illustrated in a beautiful paper \cite{HarveyLawson1982} by R. Harvey and H. B. Lawson. For more recent resources, we refer the interested readers to D. Joyce'a book \cite{Joyce2007} and J. Lotay's article \cite{Lotay2020}. Neat expositions on the calibrations, manifolds with density, isoperimetric problems with density are given by F. Morgan \cite{Morgan1988, Morgan2005, Morgan2017}.

    \begin{exmp}[Astroid length density $\sqrt{\, {x}^{\frac{2}{\,3\,}} + {y}^{\frac{2}{\,3\,}}  \,}\,$]  \label{astroid} 
 We consider the one parameter family of (a part of) the astroids in the quadrant $\Omega = \left( 0, \infty \right) \times \left( 0, \infty \right)$:
   \begin{equation*}
 {C}_{\lambda>0} =  \left\{ \;  \left(x, y \right) \in \Omega \; \vert \;  \lambda =  g(x, y) = {x}^{\frac{2}{\,3\,}} + {y}^{\frac{2}{\,3\,}} \; \right\},
\end{equation*}
To construct a calibration function $f(x,y)$ in Theorem \ref{minimizers}, we need to solve the orthogonality condition
       \begin{equation*}
  0 = \nabla g(x,y)   \cdot  \nabla f(x,y)  = \frac{2}{\, 3 \,} \begin{bmatrix}
        \; {x}^{-\frac{1}{\,3\,}} \; \\    \;   {y}^{-\frac{1}{\,3\,}}   \;
      \end{bmatrix} \cdot  \begin{bmatrix}
        \;  f_{x} \; \\    \;   f_{y} \;    
      \end{bmatrix} = \frac{2}{\, 3 \,} \left( \, {x}^{-\frac{1}{\,3\,}} f_{x} + {y}^{-\frac{1}{\,3\,}} f_{y} \, \right).
        \end{equation*}          
Integrating the induced exact differential equation $df = f_{x} dx + f_{y} dy =  {x}^{\frac{1}{\,3\,}} dx - {y}^{\frac{1}{\,3\,}}  dy$       
gives the function $f(x, y) = C \left( \, {x}^{\frac{4}{\,3\,}} - {y}^{\frac{4}{\,3\,}} \, \right)$ for a constant $C \in \mathbb{R}$.   
We take the function $f : \Omega \to \mathbb{R}$ defined by 
          \begin{equation*}
f(x, y)=  \frac{3}{\, 4 \,}  \left(  {x}^{\frac{4}{\,3\,}} - {y}^{\frac{4}{\,3\,}} \right)
        \end{equation*}    
and compute the length density 
$\Vert \, \nabla f(x,y) \, \Vert = \sqrt{\, {x}^{\frac{2}{\,3\,}} + {y}^{\frac{2}{\,3\,}}  \,} =  \sqrt{\,    g(x, y)  \,}$.
We conclude that the astroid ${x}^{\frac{2}{\,3\,}} + {y}^{\frac{2}{\,3\,}}=\lambda$ becomes an weighted length-minimizer   
  with respect to the length density $\sqrt{\, {x}^{\frac{2}{\,3\,}} + {y}^{\frac{2}{\,3\,}}  \,}$.      
  \end{exmp}
  
Example \ref{astroid} can be generalized in various ways. We consider geodesics in the quadrant endowed with the metric 
 $\left(  {x}^{2p} + {y}^{2q} \right) \left(dx^2 + dy^2 \right)$. For each $\lambda \in \mathbb{R}$, we introduce the function ${\Psi}_{\lambda} : \left(0, \infty \right) \to \mathbb{R}$ by  \begin{equation*}
      {\Psi}_{\lambda} \left( t \right) = \begin{cases}
         \frac{1}{\,1-\lambda\,} {t}^{1 - \lambda}, \quad \lambda \neq 1, \\
       \;   \ln t, \qquad \quad \;  \lambda=1.
      \end{cases} 
 \end{equation*}
Given a pair $\left(p, q\right)$ of real constants, take $g(x, y) = {\Psi}_{p}\left(x\right)+{\Psi}_{q}\left(y\right)$ and $f(x, y)= {\Psi}_{-p}\left(x\right)-{\Psi}_{-q}\left(y\right)$. The level curves of the function $g$ are length-minimizers with respect to the density $\Vert \, \nabla f(x,y) \, \Vert = \sqrt{\, {x}^{2p} + 
{y}^{2q}  \,}$.

  \begin{exmp}[Brachistochrone length density $\frac{1}{\; \sqrt{ \, -y\, } \;}$] \label{brachistochrone density} 
  For various solutions and generalizations to Johann Bernoulli's time-minimizing curve problem, we refer to, for instance, \cite{Benson1969, HawsKiser1995, Kuczmarski2015, Lawlor1996}. We present details how to construct the calibration function $f(x,y)$ in the lower half plane with respect to the density $\frac{1}{\; \mathbf{v}\left(y\right) \;}=\frac{1}{\; \sqrt{ \, -y\, } \;}$. Our construction here will be generalized in Corollary \ref{symmetry}. 
  Integrating the first variation formula for the weighted length functional with respect to $\frac{1}{\;  \mathbf{v}\left(y\right) \;} ds$ gives 
 \begin{equation*}
     \frac{1}{\;  \mathbf{v}\left(y\right) \;}  \frac{dx}{\, ds \,}  = \text{constant}.
 \end{equation*}
Indeed, letting $\mathcal{L} \left(y, x, \dot{x} \right) = \frac{1}{\;  \mathbf{v}\left(y\right) \;} \sqrt{\, 1 + {\dot{x}}^{2} \,}$ with $\dot{x}=\frac{dx}{\,dy\,}$, the Euler-Lagrange equation for the weighted length functional with respect to the weighted length element 
       \begin{equation*}
       \frac{1}{\;  \mathbf{v}\left(y\right) \;} ds=\frac{1}{\;  \mathbf{v}\left(y\right) \;} \sqrt{\, {dx}^{2} + {dy}^{2}  \,} = \mathcal{L} \left(y, x, \dot{x} \right) dy
        \end{equation*}
        reads
 \begin{equation*}
  0 =  \frac{\partial \mathcal{L}}{\, \partial x\,} - \frac{d}{\, dy\,} \, \left( \,  \frac{\partial \mathcal{L}}{\, \partial \dot{x}\,}  \, \right) = 
  - \frac{d}{\, dy\,} \, \left( \, \frac{1}{\;  \mathbf{v}\left(y\right) \;}  \frac{\dot{x}}{\,\sqrt{\, 1 + {\dot{x}}^{2} \,}\,}  \, \right),
 \end{equation*}
which guarantees that, for some constant $c \in \mathbb{R}$, 
 \begin{equation*}
 c = \frac{1}{\;  \mathbf{v}\left(y\right) \;}  \frac{\dot{x}}{\,\sqrt{\, 1 + {\dot{x}}^{2} \,}\,} 
 =   \frac{1}{\;  \mathbf{v}\left(y\right) \;}  \frac{dx}{\, ds \,}, \quad 
 \text{or equivalently}, \quad
 0 = \pm dx + \frac{c \mathbf{v}\left(y\right)}{\, \sqrt{\, 1 - c^2 { \mathbf{v}\left(y\right) }^2{}  \,} \,} dy.
 \end{equation*}
  Integrating the first integral of the geodesic equation indicates why the level set formulation of geodesics is natural. 
 In the case when $c=0$, this reduces to $0=dx$, which gives the vertical ray $x=\text{constant}$ as a geodesic. 
From now on, we consider the case $c^2 >0$ and take the plus sign in the above equation. Recall the factor $ \mathbf{v}\left(y\right)= \sqrt{ \, -y\, }$.
Letting $\rho=\frac{1}{\,2 c^2 \,}>0$, it becomes the exact differential equation 
 \begin{equation*}
  dg =  g_{x} dx + g_{y}  dy = dx + \sqrt{\, \frac{ - c^{2} y }{ 1 + c^2 y } \,} dy = dx + \sqrt{\, \frac{ - 2 \rho y }{ 1 + 2 \rho y } \,} dy,
 \end{equation*}
 which gives the level set formulation of the geodesic 
 \begin{equation*}
C = \left\{ \;  \left(x, y \right) \in {\mathbb{R}}^2 \; \vert \;  0 = g(x, y)=  x - \rho \arccos\left(  \; 1 + \frac{y}{\; \rho\;} \; \right) + \rho \sqrt{ 1 - {\left( \; 1 + \frac{y}{\; \rho\;}  \; \right)}^{2} }  \; \right\}.
 \end{equation*}
(The observation that it admits the cycloid path $\left(x, y\right)=\left( \rho \left( t - \sin t \right),  - \rho \left( 1 - \cos t \right) \right)$ will not be used in the minimization part.) We consider the strip $\Omega = \mathbb{R} \times \left(-2\rho, 0\right)$. To obtain the weighted length-minimizing property of the half cycloid $C \cap \Omega$, we find the function $f(x,y)$ satisfying the orthogonality condition
       \begin{equation*}
  0 = \nabla g(x,y)   \cdot  \nabla f(x,y)  = \begin{bmatrix}
        \; 1 \; \\         \;  \sqrt{ \frac{-y}{\; 2 \rho + y \;} } \;
      \end{bmatrix}  \cdot  \begin{bmatrix}
        \;  f_{x} \; \\    \;   f_{y} \;    
      \end{bmatrix} = f_{x} + \sqrt{ \frac{-y}{\; 2 \rho + y \;} }  f_{y}.
        \end{equation*}          
It induces the exact differential equation
          \begin{equation*}
   df = f_{x} dx + f_{y} dy =  - dx +  \sqrt{ \frac{\; 2 \rho + y \;}{-y} }  dy.
        \end{equation*}          
 Taking the calibration function $f: \Omega \to \mathbb{R}$ defined by
  \begin{equation*}
      f(x, y) = \frac{1}{\; \sqrt{\; 2\rho} \;} \left( -x +  \rho \arcsin \left(  \; 1 + \frac{\;  y\;}{\rho} \; \right) -  \rho \sqrt{ 1 - {\left( \; 1 + \frac{y}{\; \rho\;}  \; \right)}^{2} } \right)
   \end{equation*}
 in Theorem \ref{minimizers} gives the weighted length-minimizing property of the half cycloid $C \cap \Omega$ with respect to the length density $\Vert \, \nabla f(x,y) \, \Vert =  \frac{1}{\;\sqrt{\; -y\;} \;}$. One may ask the reason why the calibration function obtained by integrating the orthogonality condition $f_{x}g_{x} + f_{y}g_{y}=0$ gives the desired density $\sqrt{\, {f_{x}}^{2} +  {f_{y}}^{2} \,}=  \frac{1}{\;\sqrt{\; -y\;} \;}$. It is not a coincidence. It is due to the symmetry of the density, as indicated in the proof of Corollary \ref{symmetry}. 
   \end{exmp}
   
  Example \ref{brachistochrone density} can be extended to the case when the density function has a symmetry.
  
 \begin{cor}[Generalized brachistochrone length density $ \frac{1}{\; \mathbf{v}(y) \;}>0$] \label{symmetry} 
 Let $\mathbf{v}(y)>0$ be a ${\mathcal{C}}^{1}$ function. Given a constant $c \in \mathbb{R}$, 
we consider a ${\mathcal{C}}^{1}$ solution $g: \Omega \to \mathbb{R}$ of the exact differential equation
 \begin{equation*}
  dg = g_{x} dx + g_{y} dy = dx + \frac{c \mathbf{v}\left(y\right)}{\, \sqrt{\, 1 - c^2 { \mathbf{v}\left(y\right) }^{2}  \,} \,} \, dy.
 \end{equation*}
Here, we choose a simply connected domain $\Omega \subset {\mathbb{R}}^{2}$ such that $1 - c^2 { \mathbf{v}\left(y\right) }^{2}>0$ on $\Omega$. Then, the above differential equation is well-defined and solvable on $\Omega$. The level curve (assuming that it is non-empty)
 \begin{equation*}
 {C}_{*} = \left\{ \;  \left(x, y \right) \in \Omega  \; \vert \;   g(x, y)=0 \; \right\}
\end{equation*}
becomes a weighted length-minimizing geodesic in $\Omega$ with respect to the density function $\frac{1}{\; \mathbf{v}(y) \;}$. 
 \end{cor}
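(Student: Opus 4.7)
My plan is to apply Theorem~\ref{minimizers} by constructing a calibration function $f \in {\mathcal{C}}^{1}(\Omega)$ satisfying $\nabla f \cdot \nabla g \equiv 0$ and $\Vert \nabla f \Vert = \frac{1}{\mathbf{v}(y)}$ on $\Omega$; once such an $f$ is in hand, Theorem~\ref{minimizers} applied to the pair $(f,g)$ directly yields the weighted length-minimizing property of the level curve $C_{*}$. From the exact differential equation satisfied by $g$, I read off $\nabla g = \left(1,\, \frac{c\,\mathbf{v}(y)}{\sqrt{1 - c^{2}\mathbf{v}(y)^{2}}}\right)$, which shows at once that $\Vert \nabla g \Vert \geq 1 > 0$ and that any $\nabla f$ perpendicular to $\nabla g$ is a scalar multiple of $\left(-\frac{c\,\mathbf{v}(y)}{\sqrt{1 - c^{2}\mathbf{v}(y)^{2}}},\, 1\right)$.

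Guided by the observation that $\nabla g$ and the target density both depend only on $y$, I would look for $f$ whose differential has the translation-invariant form $df = P\,dx + Q(y)\,dy$ with $P$ independent of $x$. Closedness of $df$ forces $\partial_{y}P = 0$, so $P$ must be a constant, and the orthogonality condition $P + Q(y)\,\frac{c\,\mathbf{v}(y)}{\sqrt{1 - c^{2}\mathbf{v}(y)^{2}}} = 0$ fixes $Q$ in terms of $P$. Choosing the normalization $P = -c$ yields
\[
df = -c\,dx + \frac{\sqrt{1 - c^{2}\mathbf{v}(y)^{2}}}{\mathbf{v}(y)}\,dy,
\]
which is well-defined on $\Omega$ by the standing hypothesis $1 - c^{2}\mathbf{v}(y)^{2} > 0$. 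Since this one-form is closed and $\Omega$ is simply connected, it admits a ${\mathcal{C}}^{1}$ primitive $f$.

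The remaining step is a routine verification:
\[
\Vert \nabla f \Vert^{2} = c^{2} + \frac{1 - c^{2}\mathbf{v}(y)^{2}}{\mathbf{v}(y)^{2}} = \frac{1}{\mathbf{v}(y)^{2}},
\]
so $\Vert \nabla f \Vert = \frac{1}{\mathbf{v}(y)} > 0$, matching the prescribed density. All hypotheses of Theorem~\ref{minimizers} are then in force and the corollary follows. I anticipate no serious obstacle; the one creative move is the translation-invariant ansatz $df = P\,dx + Q(y)\,dy$ with $P$ constant, which is precisely the symmetry principle foreshadowed at the end of Example~\ref{brachistochrone density}: the $x$-translation invariance of the density $\frac{1}{\mathbf{v}(y)}$ forces a calibration of the same invariant form, after which the two linear conditions (orthogonality and closedness) determine $f$ up to an additive constant.
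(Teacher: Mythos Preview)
Your proposal is correct and follows essentially the same route as the paper: both construct the calibration via the closed one-form $df=-c\,dx+\dfrac{\sqrt{1-c^{2}\mathbf{v}(y)^{2}}}{\mathbf{v}(y)}\,dy$ on the simply connected domain $\Omega$, verify $\nabla f\cdot\nabla g=0$ and $\Vert\nabla f\Vert=\dfrac{1}{\mathbf{v}(y)}$, and then invoke Theorem~\ref{minimizers}. Your added motivation via the translation-invariant ansatz is a nice touch, though note that for $c=0$ the orthogonality condition alone does not determine $Q$; the stated formula for $df$ still works in that case and the verification goes through unchanged.
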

 
\begin{proof} In Example \ref{brachistochrone density}, we noted that the differential equation 
 \begin{equation*}
dg = dx + \frac{c \mathbf{v}\left(y\right)}{\, \sqrt{\, 1 - c^2 { \mathbf{v}\left(y\right) }^{2}  \,} \,} \, dy
\end{equation*}
 is the first integral of the weighted geodesic equation for the length density $ \frac{1}{\; \mathbf{v}(y) \;}>0$. We observe that the curve ${C}_{*}$ is regular due to the inequality
\begin{equation*}
  {\Vert \, \nabla g(x, y) \, \Vert }^{2} = \frac{1}{\, 1 - c^2 { \mathbf{v}(y) }^{2} \,}>0.
\end{equation*}
We use Poincar\'{e} Lemma to find a ${\mathcal{C}}^{1}$ solution $f: \Omega \to \mathbb{R}$ of the exact differential equation
 \begin{equation*}
  df = f_{x} dx + f_{y} dy =  -c  \,   dx + \frac{\, \sqrt{\, 1 - c^2 { \mathbf{v}\left(y\right) }^{2}  \,} \,}{ \mathbf{v}\left(y\right)}  \, dy.
\end{equation*}
We check the orthogonality condition
         \begin{equation*}
   \nabla g(x,y)   \cdot  \nabla f(x,y)  =  \begin{bmatrix}
      \; 1 \; \\ \;  \frac{c \mathbf{v}\left(y\right)}{\, \sqrt{\, 1 - c^2 { \mathbf{v}\left(y\right) }^{2}  \,} \,}  \;
      \end{bmatrix} \cdot   \begin{bmatrix}
              \; -c \; \\ \; \frac{\, \sqrt{\, 1 - c^2 { \mathbf{v}\left(y\right) }^2{}  \,} \,}{ \mathbf{v}\left(y\right)}  \;
      \end{bmatrix} = -c + c = 0
        \end{equation*}  
and compute the length density
    \begin{equation*}
         \Vert \,  \nabla f(x, y) \, \Vert= \sqrt{ \, {f_{x}}^{2} + {f_{y}}^{2} \, }
      =      \sqrt{ \, {  c   }^{2} +     \frac{\,  \, 1 - c^2 { \mathbf{v}\left(y\right)   \,} \,}{  {\mathbf{v}\left(y\right)}^{2}  }     \, }
         = \frac{1}{\; \mathbf{v}(y) \;}.
   \end{equation*}     
Theorem \ref{minimizers} shows the length-minimizing property of the level curve ${C}_{*}$ for the density $\frac{1}{\; \mathbf{v}(y) \;}$. 
 \end{proof}

We deform the hyperbolic metric in the Poincar\'{e} half-plane to construct a one parameter family of conformal metrics in the upper half plane so that a family of the conic sections are geodesics. In Examples \ref{eccentricity c}, \ref{eccentricity e}, \ref{eccentricity p}, \ref{eccentricity h}, we use Theorem \ref{minimizers} 
or Corollary \ref{symmetry} to show the length-minimizing property of such curves.

  \begin{exmp}[Conic sections with the eccentricity $\varepsilon \geq 0$]  \label{eccentricity} We preview curves in Examples \ref{eccentricity e}, \ref{eccentricity p}, \ref{eccentricity h}. Given a constant $\varepsilon \in \mathbb{R}$, we define the curve 
 \begin{equation*}
   {D}_{\varepsilon} = \left\{ \;  \left(x, y \right) \in {\mathbb{R}}^2   \; \vert \;    1 - \varepsilon x = \sqrt{\, x^2 +y^2 \,}  \; \right\}.
\end{equation*}
The curve ${D}_{\varepsilon}$ becomes a conic section with the eccentricity $\varepsilon$ and the focus $(0, 0)$. The curve ${D}_{1}$ is a parabola $2x+y^2 =1$. For $\varepsilon \neq 1$, translating the curve ${D}_{\varepsilon}$ horizontally gives the curve    
 \begin{eqnarray*}
   {C}_{\varepsilon} &=& \left\{ \;  \left(x, y \right) \in {\mathbb{R}}^2   \; \bigg\vert \;    1 - \varepsilon   \left( x - \frac{\varepsilon}{ \, 1 - {\varepsilon}^{2}  \, } \right) = \sqrt{
   \, {\left( x - \frac{\varepsilon}{ \, 1 - {\varepsilon}^{2} \, } \right)}^{2} +y^2 \, }  \; \right\} \\
    &=& \left\{ \;  \left(x, y \right) \in {\mathbb{R}}^2   \; \bigg\vert \;  {\left(  1 - {\varepsilon}^{2}   \right)}^{2} x^2 + \left(  1 - {\varepsilon}^{2}  \right) y^2  =1  \; \right\}. 
\end{eqnarray*}
 \end{exmp}

   \begin{exmp}[Conic section length density $\sqrt{\, {\varepsilon}^{2} + \frac{1}{\;{y^2} \;} \,}\,$ with $\varepsilon=0$]  \label{eccentricity c}
 We consider the hyperbolic length density $ \frac{1}{\;{y} \;}$. Given two constants $x_{0} \in \mathbb{R}$
   and $\rho>0$, we take the quarter circle ${C}_{*}$ in the strip $\Omega = \mathbb{R} \times \left(0, \rho \right)$:
 \begin{equation*}
 {C}_{*} = \left\{ \;  \left(x, y \right) \in \Omega  \; \vert \;  0 = g(x, y) =  x - x_{0} - \sqrt{\; {\rho}^{2} -  {y}^{2} \;}  \; \right\}.
\end{equation*}
 To obtain the weighted length-minimizing property of the curve ${C}_{*}$ in the domain $\Omega$ 
  with respect to the length density $\Vert \, \nabla f(x,y) \, \Vert =  \frac{1}{\;{y} \;}$, we introduce the function $f: \Omega \to \mathbb{R}$ defined by
  \begin{equation*}
      f(x, y) = - \frac{x}{\;\rho\;} + \sqrt{1 - {\left( \frac{y}{\;\rho \;} \right)}^{2} } - {\tanh}^{-1} \left( \; \sqrt{\, 1 - {\left( \frac{y}{\; \rho \;} \right)}^{2} \,}  \; \right).
   \end{equation*}
The level curves of two functions of $f(x,y)$ and $g(x,y)$ are orthogonal:
         \begin{equation*}
   \nabla g(x,y)   \cdot  \nabla f(x,y)  =  \begin{bmatrix}
        \; 1 \; \\       \;   \frac{y}{\; \sqrt{\;   { \rho}^2 - y^2      \;} \;} \;
      \end{bmatrix} \cdot   \begin{bmatrix}
        \; - \frac{1}{\; \rho \;}   \; \\  \;   \frac{\; \sqrt{\;   { \rho}^2 - y^2      \;} \;}{\rho y}    \;    
      \end{bmatrix} = 0.
        \end{equation*}      
 \end{exmp}

      \begin{exmp}[Conic section length density $\sqrt{\, {\varepsilon}^{2} + \frac{1}{\;{y^2} \;} \,}$ with $\varepsilon \in \left(0, 1\right)$] \label{eccentricity e}   
            Let $\varepsilon \in \left(0, 1\right)$ and $x_{0} \in \mathbb{R}$ be constants.    
        The level curve ${C}_{\varepsilon, x_{0}}$ in the strip $\Omega = \mathbb{R} \times \left(0, \frac{1}{\, 1 - {\varepsilon}^{2} \,} \right)$  defined by 
      \begin{equation*}
        {C}_{\varepsilon, x_{0}} = \left\{ \;  \left(x, y \right) \in \Omega \; \Bigg\vert \;  
       0= g(x,y) = \left( 1 - {\varepsilon}^{2} \right) \left(x - x_{0} \right) - \sqrt{\,1 - \left( 1 - {\varepsilon}^{2} \right) y^2 \,}
        \; \right\}
\end{equation*}
 is a part of the ellipse with the eccentricity $\varepsilon \in \left(0, 1\right)$.
  To obtain the weighted length-minimizing property of the curve $  {C}_{\varepsilon, x_{0}}$ in the domain $\Omega$ 
  with respect to the length density $\Vert \, \nabla f(x,y) \, \Vert = \sqrt{\, {\varepsilon}^{2} + \frac{1}{\;{y^2} \;} \,}$, we introduce the function $f: \Omega \to \mathbb{R}$ defined by
  \begin{equation*}
      f(x, y) = - x +\sqrt{\,1 - \left( 1 - {\varepsilon}^{2} \right) y^2 \,} - {\tanh}^{-1} \left( \; \sqrt{\,1 - \left( 1 - {\varepsilon}^{2} \right) y^2 \,} \; \right).
   \end{equation*}
The level curves of two functions of $f(x,y)$ and $g(x,y)$ are orthogonal:
         \begin{equation*}
   \nabla g(x,y)   \cdot  \nabla f(x,y)  =  \left( 1 - {\varepsilon}^{2} \right) \begin{bmatrix}
        \; 1 \; \\       \;   \frac{  y}{\; \sqrt{\,1 - \left( 1 - {\varepsilon}^{2} \right) y^2 \,}  \;} \;
      \end{bmatrix} \cdot   \begin{bmatrix}
        \; - 1   \; \\  \;  \frac{\; \sqrt{\,1 - \left( 1 - {\varepsilon}^{2} \right) y^2 \,}  \;}  { y}   \;    
      \end{bmatrix} = 0.
        \end{equation*}    
  \end{exmp}

          \begin{exmp}[Conic section length density $\sqrt{\, {\varepsilon}^{2} + \frac{1}{\;{y^2} \;} \,}\,$ with $\varepsilon=1$]       \label{eccentricity p}
         Let $x_{0} \in \mathbb{R}$ be a constant.     
        The level curve ${C}_{x_{0}}$ in the upper-half plane $\Omega =\mathbb{R} \times \left(0, \infty \right)$ defined by 
      \begin{equation*}
        {C}_{x_{0}} = \left\{ \;  \left(x, y \right) \in \Omega \; \Bigg\vert \;  
       0 = g(x,y) =  x - x_{0} + \frac{1}{\,2\,} {y}^{2} 
        \; \right\}
\end{equation*}
 is a part of the parabola with the eccentricity $\varepsilon=1$.
  To obtain the weighted length-minimizing property of the curve $  {C}_{\varepsilon, x_{0}}$ in the upper-half plane $\Omega$ 
  with respect to the length density $\Vert \, \nabla f(x,y) \, \Vert = \sqrt{\,1 + \frac{1}{\;{y^2} \;} \,}$, we introduce the function $f: \Omega \to \mathbb{R}$ defined by
  \begin{equation*}
      f(x, y) = - x + \ln y.
   \end{equation*}
The level curves of two functions of $f(x,y)$ and $g(x,y)$ are orthogonal:
         \begin{equation*}
   \nabla g(x,y)   \cdot  \nabla f(x,y)  =    \begin{bmatrix}
        \; 1 \; \\       \;  y \;
      \end{bmatrix} \cdot   \begin{bmatrix}
        \; - 1   \; \\  \;  \frac{\;1  \;}  { y}   \;    
      \end{bmatrix} = 0.
        \end{equation*}   
  \end{exmp}

        \begin{exmp}[Conic section length density $\sqrt{\, {\varepsilon}^{2} + \frac{1}{\;{y^2} \;} \,}\,$ with $\varepsilon \in \left(1, \infty\right)$]   \label{eccentricity h}    
         Let $\varepsilon \in \left(1, \infty\right)$ and $x_{0} \in \mathbb{R}$ be constants.     
        The level curve ${C}_{\varepsilon, x_{0}}$ in the upper-half plane $\Omega =\mathbb{R} \times \left(0, \infty \right)$ defined by 
      \begin{equation*}
        {C}_{\varepsilon, x_{0}} = \left\{ \;  \left(x, y \right) \in \Omega \; \Bigg\vert \;  
       0 = g(x,y) = \left( 1 - {\varepsilon}^{2} \right) \left(x - x_{0} \right) - \sqrt{\,1 - \left( 1 - {\varepsilon}^{2} \right) y^2 \,}
        \; \right\}
\end{equation*}
 is a part of one branch of the hyperbola with the eccentricity $\varepsilon \in \left(1, \infty\right)$.
  To obtain the weighted length-minimizing property of the curve $  {C}_{\varepsilon, x_{0}}$ in the domain $\Omega$ 
  with respect to the length density $\Vert \, \nabla f(x,y) \, \Vert = \sqrt{\, {\varepsilon}^{2} + \frac{1}{\;{y^2} \;} \,}$, we introduce the function $f: \Omega \to \mathbb{R}$ defined by
  \begin{equation*}
      f(x, y) = - x +\sqrt{\,1 - \left( 1 - {\varepsilon}^{2} \right) y^2 \,} - {\tanh}^{-1} \left( \; \sqrt{\,1 - \left( 1 - {\varepsilon}^{2} \right) y^2 \,} \; \right).
   \end{equation*}
  \end{exmp}

    \begin{exmp}[Grim reaper length density $e^{y}$] 
It is well-known that the grim reaper $ y = - \ln  \left( \cos x \right)$ in $\left(- \frac{\;\pi\;}{2}, \frac{\;\pi\;}{2} \right) \times \left[0, \infty \right)$ is a translating soliton for the curve shortening flow in the Euclidean plane ${\mathbb{R}}^{2}$. We consider the right half of the grim reaper lying in the domain $\Omega = \left(0, \frac{\;\pi\;}{2} \right) \times \left( 0, \rho \right)$: 
   \begin{equation*}
 {C}_{*} =  \left\{ \;  \left(x, y \right) \in {\mathbb{R}}^{2}  \; \vert \;   y = - \ln  \left( \cos x \right), \, x \in \left(0, \frac{\;\pi\;}{2} \right)  \; \right\} =  \left\{ \;  \left(x, y \right) \in  \Omega \; \vert \;  g(x, y)=0  \; \right\},
\end{equation*}
where the function $g : \Omega \to \mathbb{R}$ is defined by 
    \begin{equation*}
g(x, y) = x - \arccos \left(  e^{-y} \right).
\end{equation*}
 To obtain the weighted length-minimizing property of the half grim reaper ${C}_{*}$  in the domain $\Omega$ 
  with respect to the length density $\Vert \, \nabla f(x,y) \, \Vert  = e^{y}$, we prepare the function $f : \Omega \to \mathbb{R}$  defined by 
    \begin{equation*}
  f(x, y) =  x + \sqrt{ \, e^{2y} - 1 \, } - \arccos \left( e^{-y} \right).
 \end{equation*} 
The level curves of two functions of $f(x,y)$ and $g(x,y)$ are orthogonal:
         \begin{equation*}
   \nabla g(x,y)   \cdot  \nabla f(x,y)  =  \begin{bmatrix}
        \; 1 \; \\    \;   \frac{-1}{\; \sqrt{\;  e^{2y} -1   \;} \;}  \;
      \end{bmatrix} \cdot   \begin{bmatrix}
        \;  1 \; \\    \; \sqrt{\;  e^{2y} -1 \;}   \;    
      \end{bmatrix} = 0.
        \end{equation*}  
 \end{exmp}

 \begin{cor}[Level curves of harmonic functions] \label{harmonic level set} Let $p(x,y) + i q(x,y)$ be a holomorphic function such that $ \Vert \,  \nabla p(x,y) \, \Vert >0$ on an open domain $\Omega \subset {\mathbb{R}}^{2}$.  
 We consider the one parameter family $ {\left\{  {C}_{\alpha}  \right\}}_{\alpha \in \mathbb{R}}$ of the level curves of the harmonic functions: 
  \begin{equation*}
 {C}_{\alpha} = \left\{ \;  \left(x, y \right) \in \Omega \subset {\mathbb{R}}^{2}  \; \vert \;  \left( \sin \alpha \right) \, p(x, y) + \left( \cos \alpha \right) \,  q (x, y) = 0 \; \right\}.
\end{equation*}
Assume that the ${\mathcal{C}}^{1}$ parameterized curve  ${\mathbf{X}}_{*}$ connecting from $( {x}_{1}, {y}_{1}) \in  {C}_{\alpha}$ to $( {x}_{2}, {y}_{2})  \in  {C}_{\alpha}$ lies in the level curve ${C}_{\alpha}$. For any ${\mathcal{C}}^{1}$ parameterized curve $\mathbf{X} \subset \Omega$ connecting from $( {x}_{1}, {y}_{1})$ to $( {x}_{2}, {y}_{2})$, we have 
\begin{equation*}
   \int_{\mathbf{X}} \, \Vert \,  \nabla p(x, y) \, \Vert \; ds  \geq    \int_{{\mathbf{X}}_{*}} \, \Vert \, \nabla p(x, y) \, \Vert \; ds.
\end{equation*}
In other words, the curve ${\mathbf{X}}_{*}$ becomes a weighted length-minimizer with respect to the density $\Vert \,  \nabla p(x, y) \, \Vert$. 
 \end{cor}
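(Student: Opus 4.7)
The plan is to deduce the corollary directly from Theorem \ref{minimizers} by producing an explicit calibration pair from the Cauchy--Riemann data. The natural choice is the rotation of the conjugate pair $(p,q)$ by the angle $\alpha$, namely
\begin{equation*}
g(x,y) = (\sin\alpha)\, p(x,y) + (\cos\alpha)\, q(x,y), \qquad f(x,y) = (\cos\alpha)\, p(x,y) - (\sin\alpha)\, q(x,y),
\end{equation*}
so that the zero set $\{g = 0\}$ is exactly $C_{\alpha}$ and $f$ plays the role of the orthogonal family.

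Next, I would verify the three hypotheses of Theorem \ref{minimizers}. Since $p + iq$ is holomorphic, the Cauchy--Riemann equations $p_{x} = q_{y}$ and $p_{y} = -q_{x}$ yield the two pointwise identities $\nabla p \cdot \nabla q = 0$ and $\|\nabla p\| = \|\nabla q\|$ on $\Omega$. A direct expansion then gives
\begin{equation*}
\nabla f \cdot \nabla g = (\sin\alpha \cos\alpha)\bigl( \|\nabla p\|^{2} - \|\nabla q\|^{2} \bigr) + (\cos 2\alpha)\, \nabla p \cdot \nabla q = 0,
\end{equation*}
and the same identities produce $\|\nabla f\|^{2} = \|\nabla g\|^{2} = \|\nabla p\|^{2}$, which is strictly positive on $\Omega$ by hypothesis. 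In particular, the conformal density $\|\nabla f\|$ supplied by Theorem \ref{minimizers} coincides pointwise with the target density $\|\nabla p\|$.

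Once these identities are in place, Theorem \ref{minimizers} applies verbatim: the endpoints $(x_{1}, y_{1}), (x_{2}, y_{2}) \in C_{\alpha}$ satisfy $g(x_{1}, y_{1}) = g(x_{2}, y_{2}) = 0$, so the level set $C_{*}$ of Theorem \ref{minimizers} is exactly $C_{\alpha}$, and the weighted length inequality with respect to $\|\nabla p\|$ follows immediately.

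I do not foresee any genuine obstacle. The mild subtlety is that $\Omega$ is only assumed open, not simply connected, but this causes no problem here because $f$ is written globally as an explicit linear combination of the given $p$ and $q$, so no appeal to the Poincar\'{e} Lemma is required; the holomorphy hypothesis alone produces the calibration.
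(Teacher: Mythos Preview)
Your proposal is correct and essentially identical to the paper's proof: the paper defines the calibration pair via the compact formula $f+ig = e^{i\alpha}(p+iq)$, whose real and imaginary parts are precisely your $f=(\cos\alpha)p-(\sin\alpha)q$ and $g=(\sin\alpha)p+(\cos\alpha)q$, and then invokes the Cauchy--Riemann equations and Theorem~\ref{minimizers} exactly as you do. Your explicit verification of $\nabla f\cdot\nabla g=0$ and $\|\nabla f\|=\|\nabla p\|$, and your remark that no simple-connectedness is needed, are welcome elaborations of what the paper states more tersely.
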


\begin{proof} We take the holomorphic function $f(x, y)+ig(x,y) = e^{i \alpha} \left( \, p(x,y) + i q(x,y) \, \right)$ in Theorem \ref{minimizers} to deduce the inequality. It follows from the Cauchy-Riemann equations $p_{x}=q_{y}$ and $p_{y}+q_{x}=0$ that
  \begin{equation*}
    \Vert \,  \nabla f(x, y) \, \Vert = \Vert \,  \nabla g(x, y) \, \Vert = \Vert \,  \nabla p(x, y) \, \Vert = \Vert \,  \nabla q(x, y) \, \Vert >0,
   \end{equation*}
  which implies that the level curve ${C}_{\alpha} = \left\{ \;  \left(x, y \right) \in \Omega \subset {\mathbb{R}}^{2}  \; \vert \;  g (x, y) = 0 \; \right\}$ is regular. 
\end{proof}

 \begin{exmp}[Logarithmic spiral length density $\frac{1}{\,\sqrt{\, x^2 +y^2 \,}\,}$]  We work with the polar coordinates $\left(r, \theta \right)$. Let $r_{0}>0$, $\theta_{0} \in \mathbb{R}$, $\alpha \in \left(0, \pi \right)$, $\lambda = - \cot \alpha$ be constants. Taking the holomorphic function $p(x, y)+i q(x, y) = \ln \left( \frac{r}{\, r_{0} \,} \right) + i \left( \theta - \theta_{0} \right)$ in Corollary \ref{harmonic level set} gives the weighted length-minimizing property of the logarithmic spiral $r = r_{0} \, e^{\lambda \left( \theta - {\theta}_{0} \right) }$ with respect to the length density $\Vert \, \nabla p(x,y) \, \Vert  = \frac{1}{\,\sqrt{\, x^2 +y^2 \,}\,}$.  See also Shelupsky's argument \cite[p. 785]{Shelupsky1961} based on a variation of Snell's Law for the radial density.
 \end{exmp}
 

\end{document}